\newtheorem{theorem}{Theorem}[section]
\newtheorem{lemma}[theorem]{Lemma}
\newtheorem{corollary}[theorem]{Corollary}
\theoremstyle{definition}
\newtheorem{definition}[theorem]{Definition}
\theoremstyle{remark}
\newtheorem{remark}[theorem]{Remark}
\numberwithin{equation}{section}
\begin{document}

\title{Singularities of mean curvature flow and isoperimetric inequalities in $\mathbb{H}^3$}

\author{Kui Wang}
\address{School of Mathematic Sciences, Fudan University, ShangHai, 200433}
\curraddr{Department of Mathematics, University of California, San Diego,
 La Jolla , CA 92093}
\email{09110180001@fudan.edu.cn}
\thanks{The author was sponsored by the China Scholarship Council for two year
study at University of California, San Diego.}




\keywords{Mean curvature flow, Isoperimetric inequality, Willmore energy, Hyperbolic space}

\begin{abstract}
In this article, by following the method in \cite{PT}, combining  Willmore energy
with isoperimetric inequalities, we construct two
examples of singularities under mean curvature flow in $\mathbb{H}^3$. More precisely,
there exists a torus, which must develop a
singularity under MCF before the volume it encloses decreases to zero.
There also exists a topological sphere in the shape of a dumbbell,
which must develop a
singularity in the flow before its area shrinks to zero.
Simultaneously, by using the flow, we proved an isoperimetric
inequality for some domains in $\mathbb{H}^3$.
\end{abstract}

\maketitle

\section{Introduction}
Let $N$ be a smooth $n-1$ dimensional compact manifold without boundary and
$F_0: N \rightarrow M^n$ be a smooth embedding into an $n$-dimensional Riemannian manifold $(M,g)$.
An evolution $F(x,t):N\times [0,T)\rightarrow M$ is defined as mean curvature flow (MCF)
with initial hypersurface $F_0(N)$ if it is satisfying
\begin{equation}
\left\{\begin{array}{l}
\large{\partial_t F(x,t)=-H\nu}\\
\large{F(x,0)=F_0}\\
\end{array},\right.\label{mcf}
\end{equation}
where $H$ denotes the mean curvature with respect to outer unit normal direction $\nu$.
This flow has been a useful topic in the study of geometry problems
and there are many good results (cf.\cite{GH,XZ,CM}). One useful application is to study the isoperimetric
inequality. The classical isoperimetric inequality in $\mathbb{R}^{n}$ states that
for any bounded domain $\Omega$ in $\mathbb{R}^{n}$ with smooth boundary $\partial \Omega$, we have
\begin{equation}
Area(\partial \Omega)\geq n\omega_n^{\frac{1}{n}}Vol^{\frac{n-1}{n}}(\Omega)\label{ISO},
\end{equation}
where $\omega_n$ is the volume of the unit ball in $n$ dimensional Euclidean space.
Isoperimetric problem on manifolds with non-positive sectional curvature is still open for
higher dimensions. For dimension 3 and 4, the isoperimetric inequality
 was proved  by B. Kleiner in \cite{BK} and  B. Croke in \cite{BC} respectively.

On one hand, isoperimetric inequalities can be derived from curvature flows somehow, especially in $\mathbb{R}^n$
(e.g. \cite{GH1,PT,EV, FS}). In \cite{GH1}, G. Huisken explained that MCF can be expected to converge to
solutions of the isoperimetric problem in $\mathbb{R}^n$. In \cite{EV}, the isoperimetric inequality was
proved by MCF in hyperbolic spaces for some special domains.  On the other hand, we can use the isoperimetric
inequality to study the singularities of the flow. In $\mathbb{R}^3$, P. Topping used the isoperimetric inequalities to study
singularities and in \cite{PT} two examples of singularities are developed.
One is a torus and the other is a topological sphere.

Recently, B. Andrews, Chen and etc have studied curvature flows in hyperbolic spaces in\cite{AC, AHLW}.
 They proved in \cite{AC} that MCF with initially Gauss curvature $GK>1$ pointwisely in $\mathbb{H}^3$,
the flow $\Sigma_t$ shrinks to a point and becomes more spherical. Here
$GK$ denotes the Gauss-Kronecker curvature  which is defined by the product
of the principle curvatures of $\Sigma_0$ in $\mathbb{H}^3$.
One motivation of this paper is to obtain the isoperimetric inequality by the flow.
The other motivation is to construct singular examples of the flow which violate the condition
$GK>1$.

This paper is organized as follows. In Section 2, we study the properties of
Willmore energy in three  dimensional hyperbolic space. In Section 3,
 we establish a precise comparison theorem for MCF in $\mathbb{H}^n$.
In Section 4, we use the mean curvature flow to prove the isoperimetric inequality
in $\mathbb{H}^3$. In Section 5 and 6, we give two examples of singularities
evolved by mean curvature flow.

\section{Willmore energy}
\begin{definition}
For any closed (compact without boundary) hypersurface $\Sigma\subset \mathbb{R}^3$,
the Willmore energy is defined by
\begin{equation*}
W(\Sigma)\doteqdot\int_{\Sigma} H^2 d\sigma,
\end{equation*}
where $d\sigma$ denotes the area element of $\Sigma$ and $H=\frac{k_1+k_2}{2}$
($k_1,k_2$ are the principal curvatures of $\Sigma$).
\end{definition}

For any immersed closed hypersurface, the lower bound of Willmore energy was obtained by Li-Yau
in \cite{LY} and P. Topping in \cite{PT}. More precisely,
\begin{equation}
W(\Sigma)=\int_{\Sigma} H^2 d\sigma\geq\int_{\Sigma^{+}} GK d\sigma\geq4\pi, \label{W5}
\end{equation}
where $\Sigma^{+}=\{x\in \Sigma, GK(x)\geq 0\}$ and
the equality holds if only if $\Sigma$ is a round sphere.

The problem of minimizing the Willmore energy among the class of immersed tori was proposed
by Willmore and in 1965 he conjectured in \cite{TW1} that the Willmore energy of a torus immersed 
in $\mathbb{R}^3$ is at least $2\pi^2$. The existence of a torus that minimizes the Willmore 
energy was established by L. Simon in \cite{LM}. Together with (\ref{W5}), for any torus $\Sigma$,
we have the following estimate
\begin{equation}
\int_{\Sigma} H^2 d\sigma\geq c_0>4\pi, \label{W6}
\end{equation}
where $c_0$ is a constant.  
Recently, F. Marques and A. Neves proved this conjecture. For further details, one can refer to the paper \cite{FA}.

In hyperbolic space $\mathbb{H}^3$, we consider the following energy instead of Willmore energy in $\mathbb{R}^3$
\begin{equation}
\bar{W}(\Sigma)\doteqdot\int_{\Sigma} (H^2-1) d\sigma.\label{W1}
\end{equation}
This quantity comes from the conformal transformation from $\mathbb{R}^3$ to $\mathbb{H}^3$.
Further more, we have the following conformal invariant.
\begin{lemma}
Assume $(M^3, g)$ be a Riemannian manifold and $\Sigma$ be a compact hypersurface in $M$.
Let $\lambda$ be a positive function  defined on $M$, and $\bar{g}\doteqdot \lambda^2 g$ be
a conformal metric. Denote by $K$ (or $\bar{K}$) the sectional curvature of $\Sigma$ with respect to
the metrics $g$ (or $\bar{g}$). Then
\begin{equation}
\int_{\Sigma} H^2+KdA=\int_{\Sigma} \bar{H}^2+\bar{K}d\bar{A}, \label{C1}
\end{equation}
where $H$ (or $\bar{H}$) denotes the mean curvature of $\Sigma$ with respect to the outer normal
direction and $dA$ (or $d\bar{A}$) denotes the area element with respect to metric $g$ (or $\bar{g}$).
\end{lemma}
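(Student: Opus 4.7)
The identity is the classical conformal invariance of the Willmore functional, suitably corrected by the ambient sectional curvature. The natural reading is that $K$ denotes the sectional curvature of the ambient manifold $M$ evaluated on the tangent plane $T_p\Sigma$ (with the corresponding interpretation for $\bar K$), since that is what the two concrete cases $M=\mathbb{R}^3$ and $M=\mathbb{H}^3$ require in order to identify the Willmore energy $W$ on the Euclidean side with $\bar W$ on the hyperbolic side. My plan is to reduce the statement to two well-known ingredients: (i) the conformal invariance of the integrated trace-free second fundamental form, and (ii) the Gauss equation combined with Gauss--Bonnet.

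First I would pin down the conformal transformation of the second fundamental form. Writing $\bar g=\lambda^2 g$, the unit normal scales as $\bar\nu=\lambda^{-1}\nu$, the induced metric scales as $\bar g_{ij}=\lambda^2 g_{ij}$, and a direct computation of $\bar h_{ij}=-\bar g(\bar\nabla_{\partial_i}\partial_j,\bar\nu)$ using the transformation of the Levi--Civita connection yields
\begin{equation*}
\bar h_{ij}=\lambda\,h_{ij}+\lambda\,\nu(\log\lambda)\,g_{ij},\qquad \bar H=\lambda^{-1}\!\bigl(H+\nu(\log\lambda)\bigr),
\end{equation*}
so that the trace-free part satisfies $\bar h^{\circ}_{ij}=\lambda\,h^{\circ}_{ij}$ with $h^{\circ}_{ij}:=h_{ij}-Hg_{ij}$. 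Combining this with $\bar g^{ij}=\lambda^{-2}g^{ij}$ and $d\bar A=\lambda^2\,dA$ gives $|\bar h^{\circ}|^2_{\bar g}\,d\bar A=|h^{\circ}|^2_g\,dA$ pointwise. Since on a surface in a $3$-manifold $|h^{\circ}|^2=2(H^2-K_{\mathrm{ext}})$, where $K_{\mathrm{ext}}=k_1k_2$ is the extrinsic (Gauss--Kronecker) curvature, this shows that the measure $(H^2-K_{\mathrm{ext}})\,dA$ is itself conformally invariant.

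Next I would invoke the Gauss equation, which for a surface in a $3$-manifold reads $K_{\mathrm{int}}=K_{\mathrm{ext}}+K$, where $K_{\mathrm{int}}$ is the intrinsic Gauss curvature of $\Sigma$ and $K$ is the ambient sectional curvature of $T_p\Sigma$. Substituting $K_{\mathrm{ext}}=K_{\mathrm{int}}-K$ into the invariant from the previous step gives that
\begin{equation*}
\int_\Sigma (H^2+K)\,dA-\int_\Sigma K_{\mathrm{int}}\,dA
\end{equation*}
is conformally invariant. The second integral equals $2\pi\chi(\Sigma)$ by Gauss--Bonnet, hence is independent of the metric on $\Sigma$ (and in particular identical for $g$ and $\bar g$). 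Adding it back on both sides yields $\int_\Sigma (H^2+K)\,dA=\int_\Sigma (\bar H^2+\bar K)\,d\bar A$, as claimed.

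The only delicate step is the derivation of the scaling law $\bar h^{\circ}_{ij}=\lambda h^{\circ}_{ij}$: one must keep track of the sign convention for $\nu$ and for $H$ used in \eqref{mcf}, and verify that the $\nu(\log\lambda)$ contribution to $\bar h_{ij}$ is exactly absorbed into the trace part $\bar H\bar g_{ij}$. Once that conformal transformation is established, the remainder is a clean application of the Gauss equation and Gauss--Bonnet, so I expect no further obstacles.
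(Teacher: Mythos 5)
Your proof is correct and follows exactly the route the paper gestures at (``direct calculations and the Gauss--Bonnet formula,'' citing Willmore's book): the pointwise conformal invariance of $|h^{\circ}|^2\,dA = 2(H^2-K_{\mathrm{ext}})\,dA$, the Gauss equation $K_{\mathrm{int}}=K_{\mathrm{ext}}+K$, and Gauss--Bonnet. You also correctly resolve the statement's ambiguous phrase ``sectional curvature of $\Sigma$'' as the \emph{ambient} sectional curvature of $(M,g)$ on $T_p\Sigma$, which is the reading needed both for \eqref{C1} to be true and for the paper's derivation of \eqref{W3} from \eqref{W5}.
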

The proof of this lemma can be done by direct calculations and the Gauss-Bonnet formula.
For further details, one can
refer to the Chapter 7 in \cite{TW}.
By (\ref{W5}), (\ref{W6}) and the above Lemma, we conclude that
\begin{theorem}\label{WT1}
Let $\Sigma$ be any immersed closed hypersurface in $\mathbb{H}^3$, then we have
\begin{equation}
\int_{\Sigma} (H^2-1) d\sigma \geq 4\pi, \label{W3}
\end{equation}
and moreover if $\Sigma$ is any immersed torus in $\mathbb{H}^3$, we have
\begin{equation}
\int_{\Sigma} (H^2-1) d\sigma\geq c_0,\label{W2}
\end{equation}
where $c_0$ is a constant satisfying $c_0>4\pi$ .
\end{theorem}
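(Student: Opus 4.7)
The plan is to reduce both claimed bounds to the Euclidean Willmore estimates (\ref{W5}) and (\ref{W6}) via the conformal invariance identity (\ref{C1}).

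First I would realise $\mathbb{H}^3$ inside $\mathbb{R}^3$ using the Poincar\'e ball model, writing $\mathbb{H}^3 = (B^3, \bar{g})$ with $\bar{g} = \lambda^2 g$, $\lambda = 2/(1-|x|^2)$, on the open unit ball $B^3 \subset \mathbb{R}^3$ equipped with the Euclidean metric $g$. A closed immersed hypersurface $\Sigma \subset \mathbb{H}^3$ is then automatically a closed immersed surface inside $(B^3, g) \subset (\mathbb{R}^3, g)$, so both the Euclidean invariants $(H, d\sigma)$ and the hyperbolic invariants $(\bar{H}, d\bar{\sigma})$ of $\Sigma$ are defined on the same underlying object.

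Applying the preceding lemma to this conformal pair, the ambient sectional curvature along $T\Sigma$ is $K \equiv 0$ on the Euclidean side and $\bar{K} \equiv -1$ on the hyperbolic side, so (\ref{C1}) collapses to
\begin{equation*}
\int_\Sigma H^2 \, d\sigma = \int_\Sigma (\bar{H}^2 - 1) \, d\bar{\sigma}.
\end{equation*}
Then (\ref{W3}) follows at once by applying the classical Willmore lower bound (\ref{W5}) to the Euclidean side; and (\ref{W2}) follows in the same way upon replacing (\ref{W5}) by Simon's torus bound (\ref{W6}), with the same constant $c_0 > 4\pi$ being inherited.

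I do not anticipate any serious obstacle. The step that most deserves care is fixing the interpretation of the symbol $K$ in (\ref{C1}) as the sectional curvature of the ambient metric $g$ restricted to the tangent 2-plane of $\Sigma$ (the natural reading, since $g$ is defined on $M^3$ rather than on $\Sigma$ itself). With that reading the signs in (\ref{C1}) deposit exactly a $-1$ on the hyperbolic side after conformally flattening to Euclidean space, which is precisely the correction needed to convert the Euclidean Willmore integrand $H^2$ into the hyperbolic integrand $\bar{H}^2-1$.
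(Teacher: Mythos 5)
Your proposal is correct and follows exactly the route the paper intends: conformally identify $\mathbb{H}^3$ with the Poincar\'e ball in $\mathbb{R}^3$, apply Lemma 2.2 with ambient curvatures $K\equiv 0$ and $\bar K\equiv -1$ to get $\int_\Sigma H^2\,d\sigma=\int_\Sigma(\bar H^2-1)\,d\bar\sigma$, and invoke the Euclidean bounds (\ref{W5}) and (\ref{W6}). The paper compresses this to a single sentence; your version is a faithful expansion of it, and your remark that $K$ in (\ref{C1}) must be read as the ambient sectional curvature on $T\Sigma$ (not the intrinsic Gauss curvature, whose integral would be topological and cancel) correctly resolves the one genuine ambiguity in the lemma's wording.
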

\begin{remark}
For the inequality (\ref{W3}), one can see an alternative proof in \cite{MR}. By the result in \cite{FA}, the
constant $c_0$ is actually $2\pi^2$. But in this paper, we only need $c_0>4\pi$.
\end{remark}
\section{Maximum principles for MCF in $\mathbb{H}^n$}
\setcounter{equation}{0}
Let us begin with a comparison
principle for mean curvature flow. Roughly speaking, two nonintersecting hypersurfaces in $\mathbb{H}^3$
will remain nonintersecting when each is evolved simultaneously.
Let $\Sigma^{1}_t$ and $\Sigma^{2}_t$ be two hypersurfaces in $\mathbb{H}^n$ envolved by MCF
for $t\in[0,T)$, and assume that $\Sigma^{1}_0$ is compact.

Define the distance function between $\Sigma^{1}_t$ and $\Sigma^{2}_t$ by
\begin{equation}
d(t)\doteqdot \inf_{p\in \Sigma^{1}_t, q\in \Sigma^{2}_t} d(p,q),\label{F2}
\end{equation}
where $d(p,q)$ denotes the distance function in the standard hyperbolic spaces.

For any $t\in[0,T)$, $d(t)$ is locally Lipschitz in time, as the mean curvature is locally bounded
and the two hypersurfaces evolved by mean curvature.
 The infimum in (\ref{F2}) is actually obtained by some points since $\Sigma^{1}_t$ is
compact. Similarly as in $\mathbb{R}^n$, we establish a comparison principle in $\mathbb{H}^n$.
\begin{theorem}\label{MP}
$\Sigma^{1}_t$ and $\Sigma^{2}_t$ are the same as above. Then for any $t\in [0,T)$,
the distance function $d(t)$ satisfies
\begin{equation}
e^t\sinh{\frac{d(t)}{2}}\geq \sinh {\frac{d(0)}{2}}.\label{F1}
\end{equation}
In particular, we have
\begin{equation}
e^td(t)\geq d(0).\label{Fa1}
\end{equation}
\end{theorem}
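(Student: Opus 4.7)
The plan is to combine a first-variation computation of the distance at a realizing pair with a second-variation (index form) constraint coming from spatial minimality, and then integrate the resulting differential inequality. The hyperbolic geometry enters only through the Jacobi equation in constant curvature $-1$, where the index form of a parallel-transport Jacobi field carries the explicit factor $2\tanh(L/2)$; this is what upgrades the flat-space avoidance conclusion $d(t)\geq d(0)$ to the $\sinh$-corrected version (\ref{F1}).

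First I would set up the realizing pair and its connecting geodesic. Since $\Sigma^1_t$ is compact, there exist $p_t\in\Sigma^1_t$, $q_t\in\Sigma^2_t$ with $d(t)=d_{\mathbb{H}^n}(p_t,q_t)$; let $\gamma_t:[0,L_t]\to\mathbb{H}^n$ be a unit-speed minimizing geodesic from $p_t$ to $q_t$, so $L_t=d(t)$. First-order optimality forces $\gamma_t$ to meet both hypersurfaces orthogonally, so $\gamma_t'(0)=\epsilon_1\nu_1(p_t)$ and $\gamma_t'(L_t)=\epsilon_2\nu_2(q_t)$ for signs $\epsilon_1,\epsilon_2\in\{+1,-1\}$ fixed by the mutual configuration. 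Writing $\phi(x,y,t):=d_{\mathbb{H}^n}(F_1(x,t),F_2(y,t))$, the standard identities $\nabla_p d=-\gamma'(0)$, $\nabla_q d=\gamma'(L)$ combined with $\partial_t F_i=-H_i\nu_i$ yield
\[
\partial_t\phi(p_t,q_t,t)=\epsilon_1 H_1(p_t)-\epsilon_2 H_2(q_t).
\]

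Next I would exploit that $(p_t,q_t)$ is a spatial minimum, so the Hessian of $\phi(\cdot,\cdot,t)$ on $\Sigma^1_t\times\Sigma^2_t$ is positive semi-definite there. Choose an orthonormal basis $\{e_i\}_{i=1}^{n-1}$ of $T_{p_t}\Sigma^1_t=\gamma_t'(0)^\perp$ and parallel-transport along $\gamma_t$ to an orthonormal basis $\{\tilde e_i\}$ of $T_{q_t}\Sigma^2_t$. For each $i$, the Hessian in the direction $(e_i,\tilde e_i)$ equals the second variation of arc length of $\gamma_t$ along the Jacobi field $J_i$ with $J_i(0)=e_i$, $J_i(L_t)=\tilde e_i$. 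In constant curvature $-1$ the perpendicular Jacobi equation reads $J''=J$, and integration by parts produces the clean identity
\[
\int_0^{L_t}\bigl(|J_i'|^2+|J_i|^2\bigr)\,ds=\langle J_i,J_i'\rangle\bigm|_0^{L_t}=2\tanh(L_t/2).
\]
Summing the nonnegative second variations over $i$, the integral terms contribute $2(n-1)\tanh(L_t/2)$ while the boundary second fundamental form terms sum to $(n-1)\bigl(\epsilon_1 H_1(p_t)-\epsilon_2 H_2(q_t)\bigr)$ by the trace identity $\mathrm{tr}=(n-1)H$. Dividing by $n-1$ yields
\[
2\tanh(d(t)/2)+\epsilon_1 H_1(p_t)-\epsilon_2 H_2(q_t)\geq 0,
\]
so, with the first step, $\partial_t\phi(p_t,q_t,t)\geq -2\tanh(d(t)/2)$.

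Finally I would promote this pointwise bound at a minimizer to an ODE for $d$. The function $d(t)$ is locally Lipschitz. From $d(t+h)\leq\phi(p_t,q_t,t+h)$ for $h<0$ (valid since $(p_t,q_t)$ is admissible at any nearby time), dividing by $h<0$ flips the inequality and gives $d'(t)\geq\partial_t\phi(p_t,q_t,t)\geq -2\tanh(d(t)/2)$ at any point of differentiability. Comparison with $\bar d'=-2\tanh(\bar d/2)$, $\bar d(0)=d(0)$, whose solution by separation of variables is $\sinh(\bar d(t)/2)=e^{-t}\sinh(d(0)/2)$, then yields $d(t)\geq\bar d(t)$, i.e.\ (\ref{F1}). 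The estimate (\ref{Fa1}) follows from (\ref{F1}) using $\sinh(y)\geq y$ together with the convexity bound $\sinh(\lambda y)\leq\lambda\sinh(y)$ for $\lambda\in(0,1]$, $y\geq 0$. The main obstacle I anticipate is the sign bookkeeping ($\epsilon_i$) across mutual configurations (nested vs.\ separated) so that the boundary contributions of the index form line up with the $\epsilon_i H_i$ terms in $\partial_t\phi$; once this alignment is made explicit, the hyperbolic case differs from the Euclidean one only by the explicit $2\tanh(L/2)$ coming from the Jacobi field in constant negative curvature.
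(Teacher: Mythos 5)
Your proof is correct and follows essentially the same strategy as the paper's: compute the time derivative of the distance at a spatial minimizer via first variation of arc length, bound it from below using the nonnegativity of the second variation along the $(n-1)$ Jacobi-type fields $f(s)e_i(s)$ with $f''=f$, $f(0)=f(d)=1$ (whose index form evaluates to $2\tanh(d/2)$ in curvature $-1$), and integrate the resulting ODE $\,d'\geq -2\tanh(d/2)$. Two small remarks: the paper fixes a specific sign configuration ("WLOG $\nu(p_t)=\gamma'(p_t)$, $\nu(q_t)=-\gamma'(q_t)$") and invokes Hamilton's trick for the differentiability step, where you keep the signs $\epsilon_i$ abstract and argue the one-sided derivative bound by hand; and you actually supply a proof that (\ref{Fa1}) follows from (\ref{F1}) via the convexity of $\sinh$ on $[0,\infty)$, a step the paper leaves implicit.
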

\begin{proof} Since $d(t)$ is locally Lipschitz in time, we can assume that $t$ is
a differentiability point. We assume $d(t)>0$ and ($p_t, q_t$) be any pair realizing such
a minimum and $\gamma(s)$ be the normal geodesic from $p_t$ to $q_t$ for $s\in[0,d(t)]$.
Denote by $H_{p_t}$ (or $H_{q_t}$) mean curvature of $\Sigma^{1}_t$ (or $\Sigma^{2}_t$)
at $p_t$ (or $q_t$) with respect to outer normal
direction $\nu(p_t)$ (or $\nu(q_t)$).

By the first variation, we know that $\nu(p_t)\perp T_{p_t}\Sigma_t^{1}$ and
$\nu(q_t)\perp T_{q_t}\Sigma_t^{2}$. Therefore, without loss of generality we
assume that $\nu(p_t)=\gamma^{'}(p_t)$ and $\nu(q_t)=-\gamma^{'}(q_t)$.

Choose an orthogonal
basis $\{e_1,\cdots, e_{n-1}\}$ of $\Sigma^{1}_t$ at $p_t$ and parallel translate them along $\gamma$,
and then we get an orthogonal frame $\{e_1(s),\cdots, e_{n-1}(s), \gamma^{'}(s)\}$ at $\gamma(s)$.

For $k=1,2$, let $\gamma_k (u)$ be a curve in $\Sigma^{k}_t$ for $u\in(-\epsilon, \epsilon)$ satisfying
 $\gamma_1 (0)=p_t$,  $\gamma_2 (0)=q_t$, $\gamma_1^{'} (0)=e_i(0)$ and $ \gamma_2^{'} (0)=e_i(d)$.
Then we can get a family of geodesics from $\gamma_1(u)$ to $\gamma_2(u)$ which has the length of $L(u)$.
By the Jacobian equation, we can write the variation fields  as $U_i(s)=f(s)e_i(s)$ for $i=1,\cdots, n-1$,
where $f(s)$ is decided by
\begin{equation*}
\left\{\begin{array}{l}
\large{f^{''}(s)-f(s)=0}\\
\large{f(0)=1, f(d)=1}
\end{array},\right.
\end{equation*}
so we have
\begin{equation*}
f(s)=\cosh s-\tanh \frac{d}{2}\sinh s.
\end{equation*}
By the definition of $d(t)$, we have $L(u)\geq L(0)$ for $u\in(-\epsilon, \epsilon)$.
Thus the second variation formula gives that
\begin{eqnarray*}
0\leq\frac{d^2}{du^2}L(u)|_{u=0}=\langle \nabla_{U_i}{U_i}, \gamma^{'}\rangle|_0^d+\int_0^d|\dot{U}_i|^2-\langle R(\gamma^{'}, U_i)\gamma^{'}, U_i\rangle d s .
\end{eqnarray*}
Summing over $i$ from $1$ to $n-1$ gives
\begin{eqnarray*}
0&\leq&H_{p_t}+H_{q_t}+\int_0^d |\dot{f}|^2+f^2ds\\
&=&H_{p_t}+H_{q_t}+\dot{f}(d)-\dot{f}(0)\\
&=&H_{p_t}+H_{q_t}+2\tanh \frac{d}{2}.\\
\end{eqnarray*}
By Hamilton's trick in \cite{RH}, we know at differentiable point $t$,
\begin{equation*}
\frac{d}{dt}d(t)=\frac{d}{dt}d(p_t,q_t)=H_{p_t}+H_{q_t}\geq-2\tanh \frac{d}{2},
\end{equation*}
which gives
\begin{equation*}
e^t\sinh{\frac{d(t)}{2}}\geq \sinh {\frac{d(0)}{2}}.
\end{equation*}
for $t\in[0,T)$ provided by $d(0)>0$. If $d(0)=0$, then (\ref{F1}) is trival.
\end{proof}
\begin{remark}
In $\mathbb{R}^n$, the distance of hypersurface evolved by MCF satisfies $d(t)\geq d(0)$
(cf. in \cite{CM}). But the exponential term $e^t$ in (\ref{F1}) actually comes in because of
the curvature of the hyperbolic spaces.
\end{remark}
From the above theorem, we immediately have the following well known corollary which is also stated in \cite{XZ}.
\begin{corollary}(cf. Proposition 10.4 in \cite{XZ}) \label{C3}
Two hypersurfaces $\Sigma^{1}_t$ and $\Sigma^{2}_t$ evolved by MCF in $\mathbb{H}^n$
are disjoint for $t\in [0,T)$ provided
by initially hypersurfaces $\Sigma^{1}_0$ and $\Sigma^{2}_0$ are disjoint.
In particular, if $\Sigma^{1}_0$ is contained in the domain enclosed by
$\Sigma^{2}_0$ strictly (that is to say the distance between them is positive), then $\Sigma^{1}_t$ is strictly contained
in the domain enclosed by  $\Sigma^{2}_t$ for all $t\in [0,T)$.
\end{corollary}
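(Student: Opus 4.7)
The plan is to derive both assertions directly from Theorem \ref{MP} together with a short continuity argument. The key input is that the estimate $e^t \sinh(d(t)/2) \geq \sinh(d(0)/2)$ forces $d(t)$ to remain strictly positive whenever $d(0) > 0$.

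For the first assertion, I would first note that since $\Sigma^{1}_0$ is compact and $\Sigma^{2}_0$ is closed and disjoint from it, standard compactness gives $d(0) > 0$. Theorem \ref{MP} then yields
\[
\sinh\frac{d(t)}{2} \;\geq\; e^{-t}\sinh\frac{d(0)}{2} \;>\; 0
\]
for every $t \in [0, T)$, so $\Sigma^{1}_t$ and $\Sigma^{2}_t$ stay disjoint throughout the flow.

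For the strict containment statement, let $\Omega^{2}_t$ denote the domain enclosed by $\Sigma^{2}_t$, and set
$I = \{t \in [0, T) : \Sigma^{1}_t \subset \Omega^{2}_t\}$.
By hypothesis $0 \in I$, so it suffices to show $I$ is both open and closed in $[0, T)$. Openness follows from the smoothness of the flow together with the positivity of $d(t)$ just established. For closedness, if $t_n \in I$ and $t_n \to t_\infty$, continuity of the flow places $\Sigma^{1}_{t_\infty}$ inside $\overline{\Omega^{2}_{t_\infty}}$; since $d(t_\infty) > 0$, no point of $\Sigma^{1}_{t_\infty}$ lies on $\Sigma^{2}_{t_\infty}$, so in fact $\Sigma^{1}_{t_\infty} \subset \Omega^{2}_{t_\infty}$. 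Connectedness of $[0, T)$ then forces $I = [0, T)$.

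The main obstacle, mild but worth flagging, is excluding the scenario in which $\Sigma^{1}_t$ would swap inside/outside roles with $\Sigma^{2}_t$ without first crossing it. For smoothly evolving compact embedded hypersurfaces any such topological swap requires an intermediate time of intersection, which would force $d(t) = 0$ and contradict Theorem \ref{MP}; this is exactly what the open-closed dichotomy above encodes. Everything else is an immediate consequence of the distance estimate.
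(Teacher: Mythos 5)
Your proposal is correct and follows exactly the route the paper intends: the paper offers no explicit proof, merely remarking ``From the above theorem, we immediately have the following well known corollary,'' so your derivation of $d(t)>0$ from Theorem~\ref{MP} together with the open-closed connectedness argument for strict containment is precisely the immediate consequence the paper has in mind, just spelled out. The only point worth noting is that the compactness of $\Sigma^1_0$ (assumed in the setup of Theorem~\ref{MP}) is genuinely needed to conclude $d(0)>0$ from disjointness, and you correctly flag and use this.
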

\begin{remark}
As we shall see, the Corollary \ref{C3} may be used to restrict the movement of
an evolving hypersurface, by comparing it to a known solution of the mean
curvature flow (such as the flow of round spheres) which is initially disjoint.
We need to use this trick later on.
\end{remark}
For any compact hypersurface $\Sigma$, we define the diameter by
\begin{equation}
diam(\Sigma)\doteqdot \max_{p, q\in \Sigma} d(p,q).\label{F3}
\end{equation}
 The following is the
consequence of Corollary \ref{C3}.
\begin{corollary}
If compact hypersurfaces $\Sigma_t$ evolved by MCF in $\mathbb{H}^n$ for $t\in [0,T)$ and
let $T_{max}$ is the maximal time of smooth existence of the flow, then
\begin{equation}
T_{max}\leq \ln (\cosh{(diam(\Sigma_0))}.\label{F4}
\end{equation}
\end{corollary}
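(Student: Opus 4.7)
The plan is to compare $\Sigma_t$ with a shrinking geodesic sphere that initially encloses it, and then to apply the containment principle from Corollary \ref{C3}. Fix any point $p_0\in\Sigma_0$ and set $D:=diam(\Sigma_0)$. By the definition of the diameter, $\Sigma_0\subset\overline{B_D(p_0)}$, so for every $\epsilon>0$ the hypersurface $\Sigma_0$ is strictly contained in the open ball $B_{D+\epsilon}(p_0)$, and in particular the distance between $\Sigma_0$ and the geodesic sphere $S_0^\epsilon:=\partial B_{D+\epsilon}(p_0)$ is strictly positive.

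Next I would analyze the MCF evolution $S_t^\epsilon$ of $S_0^\epsilon$. By the rotational symmetry of $\mathbb{H}^n$ about $p_0$ together with the uniqueness of smooth MCF, $S_t^\epsilon$ remains a geodesic sphere about $p_0$ of some radius $r_\epsilon(t)$. Since the mean curvature of such a sphere in the convention of this paper is $\coth(r)$, the flow reduces to the scalar ODE $\dot r_\epsilon=-\coth(r_\epsilon)$, equivalently $\frac{d}{dt}\cosh(r_\epsilon)=-\cosh(r_\epsilon)$. This integrates to $\cosh(r_\epsilon(t))=\cosh(D+\epsilon)\,e^{-t}$, so $S_t^\epsilon$ collapses to $p_0$ at the finite time $T^\epsilon:=\ln\cosh(D+\epsilon)$.

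Finally I would invoke Corollary \ref{C3}: since $\Sigma_0$ is strictly enclosed by $S_0^\epsilon$, for every $t\in[0,\min\{T_{max},T^\epsilon\})$ the hypersurface $\Sigma_t$ is strictly enclosed by $S_t^\epsilon$, and therefore $diam(\Sigma_t)\le 2r_\epsilon(t)$. If we had $T_{max}>T^\epsilon$, then letting $t\uparrow T^\epsilon$ would force $diam(\Sigma_{T^\epsilon})=0$, contradicting the smoothness of $\Sigma_{T^\epsilon}$ as a compact hypersurface. Hence $T_{max}\le T^\epsilon$, and sending $\epsilon\to 0$ yields $T_{max}\le\ln\cosh(D)$, which is the desired bound.

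Beyond the routine ODE calculation for the barrier sphere, the only subtlety I anticipate is the need to introduce the parameter $\epsilon>0$ so as to obtain strict containment at $t=0$ (as required by Corollary \ref{C3}) and only take $\epsilon\to 0$ at the end; otherwise the argument is a direct application of the comparison principle and no further geometric input is needed.
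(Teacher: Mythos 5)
Your proof is correct and follows essentially the same route as the paper's: compare $\Sigma_0$ with an enclosing geodesic sphere of radius slightly larger than $diam(\Sigma_0)$, solve the ODE $\cosh r(t)=e^{-t}\cosh r_0$ for the shrinking sphere, apply Corollary \ref{C3} to get $T_{max}\le\ln\cosh r_0$, and then let the radius tend to $diam(\Sigma_0)$. The only cosmetic difference is that you parametrize the radius as $D+\epsilon$ and send $\epsilon\to 0$ rather than choosing $r_0>D$ and sending $r_0\downarrow D$, which is the same limiting argument.
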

\begin{proof}
We can choose geodesic ball $B(r_0)$ in $\mathbb{H}^n$  such that $\Sigma_0$ is strictly
contained in $B(r_0)$ provided by $r_0>diam(\Sigma_0)$. The MCF equation of geodesic balls gives
\begin{equation}
\frac{d}{dt}r(t)=-coth r(t), \label{F5}
\end{equation}
with $r(0)=r_0$.

Solving the above ODE, we get
\begin{equation}
\cosh{r(t)}=e^{-t}\cosh{r_0}. \label{F6}
\end{equation}
By the Corollary \ref{C3} and equation (\ref{F6}), we conclude
\begin{equation*}
T_{max}\leq \ln (\cosh{r_0}).
\end{equation*}
Let $r_0\rightarrow diam(\Sigma_0)$, we get the estimate (\ref{F4}).
\end{proof}

\section{An isoperimetric inequality in $\mathbb{H}^3$}
\setcounter{equation}{0}
In this section we will use Willmore energy to explain the relationships between MCF
and isoperimetric inequalities in $\mathbb{H}^3$. Further more, we get an isoperimetric inequality
for some domains.

To begin with, we give some notations.
$A(t)$ denotes the area of $\Sigma_t$ and $V(t)$ denotes the
volume of the domain bounded by $\Sigma_t$. Under mean curvature flow,
the evolution equations of $A(t)$ and $V(t)$ are given by
\begin{equation}
\frac{d}{dt}A(t)=-2\int_{\Sigma_t} H^2 d\sigma_t,\label{M2}
\end{equation}
and
\begin{equation}
\frac{d}{dt}V(t)=-\int_{\Sigma_t} H d\sigma_t.\label{M3}
\end{equation}
We get the following isoperimetric inequality.
\begin{theorem}
Let $\Sigma_0$ be a smooth closed hypersurface in $\mathbb{H}^3$. If $\Sigma_0$ under MCF
has no singularity before the volume it encloses shrinks to zero, then
\begin{equation}
Area(\Sigma_0)\geq Area(\bar{\Sigma}),\label{M10}
\end{equation}
where $\bar{\Sigma}$ is a geodesic sphere with the volume it encloses is the same as
the volume $\Sigma_0$ encloses.
Moreover, if the equality holds, then $\Sigma_0$ is a geodesic sphere.
\end{theorem}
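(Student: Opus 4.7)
The plan is to reparameterize the flow by area (which is monotonically decreasing) and bound $dV/dA$ above by the same quantity realized along the family of shrinking geodesic spheres, then integrate. Using the evolution equations (\ref{M2}) and (\ref{M3}),
\begin{equation*}
\frac{dV}{dA}\;=\;\frac{\int_{\Sigma_t}H\,d\sigma_t}{2\int_{\Sigma_t}H^2\,d\sigma_t}.
\end{equation*}
Cauchy--Schwarz gives $\bigl(\int H\,d\sigma\bigr)^2\le A\int H^2\,d\sigma$, while the hyperbolic Willmore inequality (\ref{W3}) of Theorem \ref{WT1} reads $\int H^2\,d\sigma\ge 4\pi+A$. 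Combining these,
\begin{equation*}
\frac{dV}{dA}\;\le\;\frac{1}{2}\sqrt{\frac{A}{\int H^2\,d\sigma}}\;\le\;\frac{1}{2}\sqrt{\frac{A}{4\pi+A}}.
\end{equation*}

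The next step is to recognize the right-hand side as the derivative $dV/dA$ along the one-parameter family of geodesic spheres in $\mathbb{H}^3$. Indeed, substituting $A=4\pi\sinh^2 r$, so that $V=2\pi(\sinh r\cosh r - r)$, one checks directly that $dV/dA=\tfrac{1}{2}\sqrt{A/(4\pi+A)}$. Setting $G(A)=\int_0^A \tfrac{1}{2}\sqrt{s/(4\pi+s)}\,ds$, this means $G(A)=V$ whenever the pair $(A,V)$ corresponds to a geodesic sphere, and equivalently $G^{-1}(V_0)=\mathrm{Area}(\bar{\Sigma})$.

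With these in hand, I would integrate. By the hypothesis, $\Sigma_t$ stays smooth until a time $T^{*}$ at which $V(t)\to 0$; since $A(t)$ is monotonically decreasing and $\Sigma_t$ remains smooth and embedded, $A(t)\to 0$ as well. Moreover $\int H^2\,d\sigma\ge 4\pi+A>0$ by Theorem \ref{WT1}, so $dA/dt<0$ everywhere and parameterization by $A$ is legitimate on $[0,T^{*})$. Integrating the bound on $dV/dA$ from $A=0$ to $A=A_0$,
\begin{equation*}
V_0\;=\;\int_0^{A_0}\frac{dV}{dA}\,dA\;\le\;G(A_0),
\end{equation*}
which rearranges to $A_0\ge G^{-1}(V_0)=\mathrm{Area}(\bar{\Sigma})$. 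For the rigidity statement, equality $A_0=\mathrm{Area}(\bar{\Sigma})$ forces equality in Cauchy--Schwarz along the entire flow (so $H$ is constant on each $\Sigma_t$) and in the hyperbolic Willmore inequality (so each $\Sigma_t$ is a geodesic sphere by the rigidity in \cite{LY}). In particular $\Sigma_0$ itself is a geodesic sphere.

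The main technical obstacle is the qualitative statement that $A(t)\to 0$ as $V(t)\to 0$ for a smooth embedded collapsing hypersurface, which is needed to legitimize integrating the ODE from $A=0$; this should follow from the hypothesis of smooth existence plus standard embedding properties, but it is the one non-algebraic input the argument relies on. Everything else is monotone comparison of a scalar ODE with the extremal (spherical) solution.
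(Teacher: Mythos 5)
Your proof is essentially the paper's proof: the same Cauchy--Schwarz estimate on $\tfrac{d}{dt}V$, the same use of the hyperbolic Willmore inequality~(\ref{W3}) to get $\int H^2\,d\sigma\ge A+4\pi$, the same scalar comparison with the shrinking geodesic-sphere solution, and the same rigidity argument. (The paper writes (\ref{W2}) at this point, but that is plainly a typo for (\ref{W3}), which you cite correctly.) The only genuine difference is a gratuitous complication you introduce: you integrate $dV/dA$ from $A=0$, which forces you to assert that $A(T)\to 0$ when $V(T)\to 0$, and you flag this as the ``main technical obstacle.'' This claim is not needed. Write the terminal area as $A(T)\ge 0$ and integrate over $[A(T),A_0]$; then
\begin{equation*}
V_0\;=\;\int_{A(T)}^{A_0}\frac{dV}{dA}\,dA\;\le\;\frac{1}{2}\int_{A(T)}^{A_0}\sqrt{\frac{x}{4\pi+x}}\,dx\;\le\;\frac{1}{2}\int_{0}^{A_0}\sqrt{\frac{x}{4\pi+x}}\,dx\;=\;G(A_0),
\end{equation*}
since the integrand is non-negative. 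This is exactly what the paper does, and it removes the obstacle entirely. So the proposal is correct modulo deleting an unnecessary (and unproved) side claim; if you insist on keeping that claim you would actually need to justify $A(T)=0$, which is not automatic and is in any case superfluous.
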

\begin{proof}
By (\ref{M3}) and the Cauchy-Schwartz inequality, we have
\begin{equation}
-\frac{d}{dt}V(t)\leq\big(\int_{\Sigma_t}H^2d\sigma_t\big)^{\frac{1}{2}}A^{\frac{1}{2}}(t).\label{e1}
\end{equation}
The inequality (\ref{W2}) yields that
\begin{equation}
\int_{\Sigma_t}H^2d\sigma\geq A(t)+4\pi,\label{e2}
\end{equation}
thus we have
\begin{equation*}
-\frac{d}{dt}V(t)\leq\big(\int_{\Sigma_t}H^2d\sigma\big)^{\frac{1}{2}}A^{\frac{1}{2}}(t)(\frac{\int_{\Sigma_t}H^2d\sigma
}{4\pi+A(t)})^{\frac{1}{2}}
=-\frac{1}{2}\frac{d}{dt}A(t)(\frac{A(t)}{4\pi+A(t)})^{\frac{1}{2}}.\\
\end{equation*}
Integrating the above inequality from $t=0$ to $t=T$ (where $T$ is the time as the
 volume of $\Sigma_T$ encloses shrinking to zero), we obtain
\begin{equation}
V_0\leq\frac{1}{2}\int_{A(T)}^{A_0}(\frac{x}{4\pi+x})^{\frac{1}{2}}dx,\label{M4}
\end{equation}
where $A_0$, $V_0$ denote the area of the initial hypersurface $\Sigma_0$ and the volume of
the domain it encloses respectively. Therefore inequality (\ref{M4}) implies
the isoperimetric inequality (\ref{M10}).

In fact, if we assume $r_0$ satisfies $V_0=4\pi\int_0^{r_0}\sinh^2(t)dt$,
then we know that $Area(\bar{\Sigma})=4\pi\sinh^2(r_0)$.
Let $s=4\pi\sinh^2(t)$, and direct calculation shows that
\begin{equation*}
V_0=\frac{1}{2}\int_{0}^{Area(\bar{\Sigma})}(\frac{s}{4\pi+s})^{\frac{1}{2}}ds.
\end{equation*}
Using the above equation and (\ref{M4}), we conclude
\[\frac{1}{2}\int_{0}^{Area(\bar{\Sigma})}(\frac{x}{4\pi+x})^{\frac{1}{2}}dx
\leq\frac{1}{2}\int_{A(T)}^{A_0}(\frac{x}{4\pi+x})^{\frac{1}{2}}dx,\]
so (\ref{M10}) holds.

If equality holds, from (\ref{e1}), we know that the mean curvature $H$ of $\Sigma_0$ is a constant,
and by (\ref{e2}) we know $H>1$.  For the same reason as in Section 4.4 of \cite{MR}, we deduce
that $\Sigma_0$ is a geodesic sphere.
\end{proof}

\section{ Singularities of 2-tori}
It is well known that using the mean curvature flow to
prove isoperimetric inequalities is always limited by the possibility of singularity formation
in flows.
Recall that the condition in \cite{AC} is $GK>1$ for initial hypersurface $\Sigma_0$ which implies
\begin{equation}
\int_{\Sigma_0}(GK-1)d\sigma>0.\label{CON}
\end{equation}
If $\Sigma_0$ is a torus, then the Gauss-Bonnet formula gives
\begin{equation*}
\int_{\Sigma_0}(GK-1)d\sigma=0,
\end{equation*}
which violates the condition (\ref{CON}). Our aim in this section is to give an example of
tori which must develop a singularity in the flow.
\begin{theorem}
Let $\Sigma_{\epsilon}$ be the boundary of a round unit ball $B_0$ in $\mathbb{H}^3$ with a thin hole (radius $\epsilon$) drilled
through it. Smooth $\Sigma_{\epsilon}$, and still denote by $\Sigma_{\epsilon}$.
Then there exists some $\epsilon_0$ small enough such that
$\Sigma_{\epsilon_0}$  under MCF must develop a singularity before the volume it
 encloses shrinks to zero.
\end{theorem}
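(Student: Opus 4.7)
The plan is to run the argument of Theorem 4.1 with the stronger torus Willmore bound (\ref{W2}) in place of (\ref{W3}), and then exploit the strict inequality $c_0 > 4\pi$ by choosing $\epsilon_0$ so small that $\Sigma_{\epsilon_0}$ looks essentially like a unit geodesic sphere in area and enclosed volume. The ``genus--$1$ isoperimetric inequality'' coming from $c_0>4\pi$ will then demand strictly more area than a true sphere of that volume has, giving a contradiction.

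First I would control the geometry of $\Sigma_\epsilon$. Write $A_* := 4\pi\sinh^2(1)$ and $V_* := 4\pi\int_0^1 \sinh^2(s)\,ds$ for the area and enclosed volume of $\partial B_0$. I would verify that the drilling-and-smoothing construction can be arranged so that
\begin{equation*}
A_0(\epsilon) := \mathrm{Area}(\Sigma_\epsilon) \longrightarrow A_*, \qquad V_0(\epsilon) := \mathrm{Vol}(\Sigma_\epsilon) \longrightarrow V_* \qquad (\epsilon \to 0),
\end{equation*}
by concentrating the smoothing region in a tubular neighbourhood of the geodesic axis of the drilled hole; the hyperbolic area of this region is $O(\epsilon)$ and its volume is $O(\epsilon^2)$ if the smoothing scale is chosen compatibly.

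Second, assume for contradiction that $\Sigma_{\epsilon_0}$ evolves smoothly under MCF until the enclosed volume vanishes at some time $T$. Then for every $t \in [0, T)$ the surface $\Sigma_t$ is an embedded torus, so Theorem \ref{WT1} yields
\begin{equation*}
\int_{\Sigma_t} H^2 \, d\sigma_t \geq A(t) + c_0 ,
\end{equation*}
with $c_0 > 4\pi$. Repeating the Cauchy--Schwarz computation of Theorem 4.1 verbatim, with $c_0$ replacing $4\pi$, and integrating from $0$ to $T$ gives
\begin{equation*}
V_0(\epsilon_0) \;\leq\; \Phi_{c_0}\bigl(A_0(\epsilon_0)\bigr), \qquad \Phi_c(A) := \tfrac{1}{2}\int_0^A \sqrt{\tfrac{x}{c+x}}\,dx .
\end{equation*}

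Third, I close the argument. The identity in the proof of Theorem 4.1 gives $V_* = \Phi_{4\pi}(A_*)$. Since the integrand $\sqrt{x/(c+x)}$ is strictly decreasing in $c$ on $(0,\infty)$,
\begin{equation*}
\Phi_{c_0}(A_*) \;<\; \Phi_{4\pi}(A_*) \;=\; V_*,
\end{equation*}
so there is a fixed gap $\delta := V_* - \Phi_{c_0}(A_*) > 0$, independent of $\epsilon$. By Step 1 together with the continuity and monotonicity of $\Phi_{c_0}$, for $\epsilon_0$ small enough one has $|V_0(\epsilon_0) - V_*| < \delta/3$ and $|\Phi_{c_0}(A_0(\epsilon_0)) - \Phi_{c_0}(A_*)| < \delta/3$, hence $V_0(\epsilon_0) > \Phi_{c_0}(A_0(\epsilon_0))$, contradicting Step 2.

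The main obstacle is Step 1: the drilling--plus--smoothing operation must be performed so that the area defect and the volume defect both tend to zero as $\epsilon \to 0$, while keeping $\Sigma_\epsilon$ a smooth embedded torus. This is intuitively clear because the hyperbolic area and volume forms are smooth and the modified region sits in an $\epsilon$-tube around a geodesic arc, but one has to write down an explicit smoothing (e.g.\ gluing in a standard hyperbolic ``neck''  near the two circles where the drilled cylinder meets $\partial B_0$) and estimate the resulting cap contributions. Once Step 1 is in place, the remainder is a mechanical adaptation of Theorem 4.1 driven solely by the strict inequality $c_0 > 4\pi$.
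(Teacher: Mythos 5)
Your proposal is correct and follows essentially the same route as the paper: apply the genus-one Willmore bound (\ref{W2}) to rerun the integral inequality of Theorem 4.1, note that $A(\Sigma_\epsilon)\to A(\partial B_0)$ and $\mathrm{Vol}(\Sigma_\epsilon)\to \mathrm{Vol}(B_0)$ as $\epsilon\to 0$, and derive a contradiction from the uniform gap produced by $c_0>4\pi$ (your $\delta$ plays the role of the paper's constant $c=\tfrac12\int_0^{2\pi}\bigl[\sqrt{\tfrac{x}{4\pi+x}}-\sqrt{\tfrac{x}{c_0+x}}\bigr]dx$). Your Step~3 is a slightly more explicit epsilon-management of the paper's ``let $\epsilon\to 0$'' limiting step, but the substance is identical.
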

\begin{proof}
For any small $\epsilon$, $\Sigma_{\epsilon}$ is a torus,
then (\ref{W2}) implies
\begin{equation*}
\int_{\Sigma_{\epsilon}}H^2 d\sigma \geq c_0+Area(\Sigma_{\epsilon}),
\end{equation*}
where $c_0>4\pi$ is a constant which is appearing in (\ref{W2}).

Suppose there is no singularity before the volume it encloses shrinks to zero. Following the
same process in Section 4,  we obtain
\begin{equation*}
|B_{\epsilon}|\leq\frac{1}{2}\int_{0}^{Area(\Sigma_{\epsilon})}(\frac{x}{c_0+x})^{\frac{1}{2}}
dx\leq\frac{1}{2}\int_{0}^{Area(\Sigma_{\epsilon})}(\frac{x}{4\pi+x})^{\frac{1}{2}}dx-c,
\end{equation*}
where \[c\doteqdot\frac{1}{2}\int_{0}^{2\pi}(\frac{x}{4\pi+x})^{\frac{1}{2}}-
(\frac{x}{c_0+x})^{\frac{1}{2}}dx\] is a positive constant independent of $\epsilon$.
Here we used the fact that $Area(\Sigma_{\epsilon})>2\pi$ provided by $\epsilon$ sufficiently small.

Let $\epsilon\rightarrow0$, we conclude
\begin{equation*}
Vol(B_0)\leq Vol(B_0)-c,
\end{equation*}
which is a contraction and therefore the MCF must develop a singularity before
the the volume it encloses shrinks to zero .
\end{proof}

\section{ Singularities of 2-spheres}
\setcounter{equation}{0}
In this section, we aim to construct a topological sphere which must develop a singularity under mean
curvature flow before its area shrinks to zero.  More precisely, we have the following
theorem.
\begin{theorem}\label{THE}
There exists an embedded topological sphere $M_0$ in the shape of a dumbbell in $\mathbb{H}^3$  enclosing two unit spheres
with centres separated by a sufficiently large distance $d$,
such that $M_0$ must develop a singularity under MCF before the area
of $M_t$ decreases to zero.
\end{theorem}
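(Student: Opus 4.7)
The plan is to parallel the torus argument of Section 5, with the ``Willmore excess'' that produces the contradiction now coming not from topology but from the geometry of a long, thin neck, while the comparison principle of Corollary \ref{C3} ensures that the flow must persist long enough for this excess to bite.

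First, fix $p_1, p_2 \in \mathbb{H}^3$ with $d(p_1, p_2) = d$ and construct $M_0$ as a smooth embedded topological sphere bounding a domain $\Omega_0$ that strictly contains $B(p_1, 1) \cup B(p_2, 1)$, joined by a thin axial neck of radius $\rho$; both $d$ (large) and $\rho$ (small) will be chosen at the end. Assume for contradiction that the MCF $M_t$ is smooth on $[0, T)$ with $A(t), V(t) \to 0$ as $t \to T$. Since the MCF of a unit geodesic sphere obeys $\dot r = -\coth r$ with extinction time $t_1 := \ln \cosh 1$, Corollary \ref{C3} applied to the strict inclusions $B(p_i, 1) \Subset \Omega_0$ yields $B(p_i, r(t)) \subset \Omega_t$ for every $t \in [0, t_1)$, where $\cosh r(t) = e^{-t}\cosh 1$. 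Hence $V(t) \geq 2V_{\mathrm{ball}}(r(t)) > 0$ on $[0, t_1)$, giving the lower bound $T \geq t_1$.

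For the contradicting upper bound, the key observation is that on the thin neck the mean curvature $H \approx \coth 2\rho$ is large, so a direct computation shows $\int_{M_0}(H^2 - 1)\, d\sigma - 4\pi \gtrsim d/\rho$, far in excess of the baseline coming from Theorem \ref{WT1}. If this excess persists for a short but nontrivial time interval, one can repeat the Cauchy--Schwarz plus Willmore argument of Theorem 4.1 with the strengthened inequality $\int H^2 \geq A + 4\pi + \varepsilon$, $\varepsilon \sim d/\rho$, which would give
\[
V_0 \;\leq\; \tfrac{1}{2}\int_0^{A_0}\sqrt{\tfrac{x}{4\pi + x + \varepsilon}}\,dx.
\]
The right-hand side tends to $0$ as $\varepsilon \to \infty$, contradicting $V_0 \geq 2V_{\mathrm{ball}}(1)$ as soon as $d/\rho$ is large enough.

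The main obstacle is to make the \emph{persistence} of the large Willmore excess quantitative: the strengthened inequality is only available a priori at $t = 0$, and in principle the neck could widen and lose its curvature contribution. The natural remedy is a barrier comparison of the neck of $M_t$ with an exact hyperbolic geodesic tube evolving by MCF (whose radius satisfies an explicit ODE analogous to (\ref{F5})), applied via Theorem \ref{MP}: this controls how fast the neck can widen, and one checks that a neck of initial radius $\rho$ remains of radius $\lesssim C\rho$ on an interval $[0, c\rho^2]$, long enough to accumulate the required lower bound on $\int_0^T\!\int H^2\, d\sigma\, dt$. Once this persistence estimate is in place, choosing $d \gg 1/\rho$ yields the desired singular dumbbell, in the spirit of Topping's construction in \cite{PT}.
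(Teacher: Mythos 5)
Your approach is genuinely different from the paper's, and it has a real gap that the paper's strategy is specifically designed to avoid.

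You correctly set up the lower bound $T\geq t_1=\ln\cosh 1$ via Corollary \ref{C3}, and your estimate $\int_{M_0}(H^2-1)\,d\sigma-4\pi\gtrsim d/\rho$ for the neck's Willmore excess at $t=0$ is accurate ($H\approx\coth 2\rho\approx 1/(2\rho)$ on a tube of radius $\rho$ and length $\sim d$). But you then try to feed a strengthened inequality $\int H^2\geq A+4\pi+\varepsilon$ into the Cauchy--Schwarz argument of Theorem 4.1, and this requires the excess to hold along the entire flow. You identify this as ``the main obstacle'' and propose a barrier comparison of the evolving neck with a shrinking geodesic tube via Theorem \ref{MP}. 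This does not work as stated: the avoidance principle applies to pairs of complete (here, closed) hypersurfaces, one disjoint from or enclosed by the other, and you cannot apply it to the \emph{neck portion} of $M_t$ alone. The geodesic tube and $M_0$ are neither disjoint nor nested (the two bulbous ends of $M_0$ stick out of any tube that contains the neck), so no inclusion is preserved. Moreover, even granting a persistence interval $[0,c\rho^2]$, the integrand's $\varepsilon$ would be time-dependent, and the change-of-variables integration in Theorem 4.1 (from $t$ to $A$) no longer closes up cleanly. The quantitative claim ``remains of radius $\lesssim C\rho$ on $[0,c\rho^2]$'' is an unproved assertion, and in fact proving that a neck does \emph{not} pinch is exactly the kind of delicate statement the whole argument is supposed to avoid.

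The paper's proof of Theorem \ref{THE} sidesteps this entirely by proving a static diameter estimate (Lemma 6.3): for any closed surface $M$ in $\mathbb{H}^3$,
\begin{equation*}
\mathrm{diam}(M)\leq\frac{7}{2\pi}A^{1/2}(M)\,W^{1/2}(M),\qquad W(M)=\int_M H^2\,d\sigma.
\end{equation*}
Combined with the pointwise-in-time lower bound $\mathrm{diam}(M_t)\geq d+2r(t)$ from the enclosed shrinking balls, and the identity $W(M_t)=-\tfrac{1}{2}\tfrac{d}{dt}A(M_t)$, this gives directly
\begin{equation*}
\frac{16\pi^2}{49}\,(d+2r(t))^2\leq-\frac{d}{dt}A^2(M_t),
\end{equation*}
which integrates over $[0,T_0]$ to $A^2(M_0)\geq\frac{16\pi^2 T_0 d^2}{49}$. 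Choosing a dumbbell with $A(M_0)\approx 2\,\mathrm{Area}(\partial B(1))+\epsilon d$ and $\epsilon$ small violates this for $d$ large, giving the contradiction. The crucial point you are missing is that Lemma 6.3 holds at \emph{every} time slice and requires no control whatsoever on the neck's later geometry; the comparison principle supplies the diameter lower bound at all $t<T_0$, not just at $t=0$. Without a tool of this kind, your argument does not close.
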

\begin{remark}
If $M_0$ is a topological sphere, then the Gauss-Bonnet formula gives
\begin{equation*}
\int_{M_0}(GK-1)d\sigma=4\pi>0.
\end{equation*}
which satisfies the condition (\ref{CON}). Thus the condition $GK>1$ in \cite{AC} can not be
replaced by (\ref{CON}).
\end{remark}
To prove Theorem \ref{THE}, we need an estimate of external diameter of closed hypersurfaces
in terms of its area and Willmore energy in $\mathbb{H}^3$. Here external diameter is defined by (\ref{F3}).

The estimate in Euclidean space was obtained  by L. Simon in  \cite{LM} with some universal
constant, and an other proof was given by P. Topping in \cite{PT, PT1}.
By following P. Topping's strategy, we prove a similar estimate in three dimensional hyperbolic space.
\begin{lemma}
For any closed hypersurface $M$ in $\mathbb{H}^3$, we have the estimate
\begin{equation}
diam(M)\leq\frac{7}{2\pi}A^{\frac{1}{2}}(M)W^{\frac{1}{2}}(M),\label{M5}
\end{equation}
where $W(M)=\int_M H^2 d\sigma$ and $A(M)$ denotes the area of $M$.
\end{lemma}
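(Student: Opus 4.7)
The plan is to follow P.\ Topping's strategy from \cite{PT, PT1} and adapt it to the hyperbolic setting. The heart of the argument is a \emph{local area estimate}: for every $x \in M$ and every geodesic radius $r$ in a suitable range $(0, r_0]$, an inequality of the form
\begin{equation*}
A(M \cap B_r(x)) \geq c_1 r^2 - c_2 r \int_{M \cap B_r(x)} |H| \, d\sigma ,
\end{equation*}
where $B_r(x)$ is the hyperbolic geodesic ball. In Euclidean space this is obtained by applying the divergence theorem on $M \cap B_r(x)$ to the vector field $\nabla^M \rho$, with $\rho(y) = \tfrac{1}{2}|y-x|^2$, together with the identity $\Delta_M \rho = 2 + \vec{H}\cdot (y-x)$. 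In $\mathbb{H}^3$ the analogue of $\rho$ is $\tfrac{1}{2}d(y,x)^2$, and the relevant Hessian computation uses $\mathrm{Hess}\,d = \coth(d)(g - dr \otimes dr)$, producing additional $\cosh$ and $\sinh$ factors. Since $r$ is constrained to a bounded interval, these factors are controlled by explicit numerical constants.

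Once the local estimate is established, I would use it to propagate control along a minimizing geodesic $\gamma$ joining two points $p, q \in M$ realizing $\mathrm{diam}(M)$. The standard argument covers $\gamma$ by balls $B_{r_0}(x_i)$ and combines the local estimates at each $x_i$; one obtains a bound of the shape
\begin{equation*}
\mathrm{diam}(M) \leq \alpha \frac{A(M)}{r_0} + \beta \int_M |H| \, d\sigma .
\end{equation*}
Optimizing $r_0$ (or balancing the two terms via the Willmore bound $W(M) \geq 4\pi$ coming from Theorem \ref{WT1}) and applying Cauchy--Schwarz $\int_M |H| \leq A^{1/2}(M)\,W^{1/2}(M)$ should then yield the stated estimate, with the constant $\frac{7}{2\pi}$ emerging from explicit numerical tracking.

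The main obstacle will be precisely the first step: rewriting the Euclidean monotonicity identity for $\rho = \tfrac{1}{2}|y-x|^2$ into a hyperbolic version that still produces an explicit, modest constant. In $\mathbb{R}^3$ the identity $\Delta_M \rho = 2 + \vec{H}\cdot (y-x)$ is exact and dimension-free, while in $\mathbb{H}^3$ the corresponding expression contains $\coth$, $\cosh$, and $\sinh$ terms that must be uniformly controlled on $(0, r_0]$ without inflating the constants beyond $\frac{7}{2\pi}$. Once this bookkeeping is in place, both the propagation along the minimizing geodesic and the final Cauchy--Schwarz step are routine, and the choice of $r_0$ can be tuned to produce the stated numerical coefficient.
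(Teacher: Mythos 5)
Your overall strategy is the paper's strategy: adapt Topping's local area--Willmore density estimate and combine it with a packing argument. But the key technical step is taken by a different (and more difficult) route. You propose to mirror the Euclidean $\rho(y)=\tfrac12|y-x|^2$ by the hyperbolic $\tfrac12 d(y,x)^2$, whose Hessian $\nabla d\otimes\nabla d + d\coth d\,(g-\nabla d\otimes\nabla d)$ produces the $\coth$ terms you flag as the main obstacle. The paper sidesteps this entirely: it works with the ambient vector field $X=\nabla\cosh d$, and in $\mathbb{H}^3$ one has the \emph{exact} identity $\nabla^2\cosh d=\cosh d\cdot g_{\mathbb{H}^3}$, so that after inserting the cutoff $\Phi=\bigl(|X|_\rho^{-2}-\rho_0^{-2}\bigr)_+X$ into the tangential divergence theorem the resulting identity is structurally identical to the Euclidean monotonicity formula, with no $\coth$ bookkeeping. (Your $\tfrac12 d^2$ would still give $\Delta_M\rho\geq 2+\langle\nabla\rho,\vec H\rangle$, since $d\coth d\geq1$, but the constant produced would differ from and very likely exceed $\tfrac{7}{2\pi}$.) The paper also absorbs the mean-curvature term into $H^2/4$ by completing a square pointwise, yielding $\pi\leq(\rho_0^{-2}+\tfrac12)A(M_{\rho_0})+\tfrac14\int_{M_{\rho_0}}H^2$ directly, rather than your two-step ``linear in $\int|H|$, then Cauchy--Schwarz,'' which tends to lose a factor. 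Finally, ``cover $\gamma$ by balls $B_{r_0}(x_i)$'' is slightly misstated: the minimizing geodesic $\gamma$ between $p,q\in M$ lives in $\mathbb{H}^3$, not in $M$, and the local estimate can only be applied at centers in $M$. The correct step is a packing: if $\mathrm{diam}(M)>2N\rho_0$, greedily pick $N+1$ points $y_i\in M$ with mutual distance $\geq 2\rho_0$, note that the pieces $M\cap B_{\rho_0}(y_i)$ are disjoint, and sum the density estimate to get $(N+1)\pi\leq(\rho_0^{-2}+\tfrac12)A(M)+\tfrac14 W(M)$; then choose $\rho_0=(A/W)^{1/2}$, which satisfies $\rho_0\leq1$ by $W\geq4\pi+A$, and the explicit constant $\tfrac{7}{2\pi}$ drops out after using $A/W\leq1$. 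So your plan is conceptually on target, but the choice of potential and the formulation of the covering step both need the corrections above to actually deliver the stated constant.
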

\begin{remark}
In \cite{PT}, P. Topping proved that in $\mathbb{R}^3$
\begin{equation*}
diam(M)<\frac{2}{\pi}A^{\frac{1}{2}}(M)W^{\frac{1}{2}}(M).
\end{equation*}
\end{remark}

\begin{proof} For any fixed point $x_0\in M\subset \mathbb{H}^3$, $0<\rho<\rho_0$, we denote
$d(x)\doteqdot d(x,x_0)$, $X\doteqdot\nabla \cosh d$ and $|X|_{\rho}=\max\{|X|, \rho\}$.

Define a
vector field on $\mathbb{H}^3$ by
\begin{equation*}
\Phi(x)\doteqdot(\frac{1}{|X(x)|_{\rho}^{2}}-\frac{1}{\rho_0^{2}})_{+}X(x).
\end{equation*}
Choose  an orthogonal normal frame $\{e_1, e_{2}\}$ of $TM$, then direct calculation shows
\begin{equation}
\sum_{i=1}^{2}\int_M\langle \nabla_{e_i}\Phi, e_i\rangle d
\sigma_M=2\int_M\langle \Phi, H\nu\rangle d\sigma_M, \label{L1}
\end{equation}
where $\nu$ is the outer normal direction of $M$.

In fact
\begin{eqnarray*}
\sum_{i=1}^{2}\int_M\langle \nabla_{e_i}\Phi, e_i\rangle d\sigma_M&=& \sum_{i=1}^{2}
\int_M e_i\langle\Phi, e_i\rangle-\langle \Phi, \nabla_{e_i}e_i\rangle d\sigma_M\\
&=& \sum_{i=1}^{2}\int_M e_i\langle\Phi, e_i\rangle-\langle
\Phi^{\perp}, \nabla_{e_i}e_i\rangle-\langle \Phi^{T}, \nabla^M_{e_i}e_i\rangle d\sigma_M\\
&=& \int_M div^M\Phi^{T}+2 \langle \Phi, H\nu\rangle d\sigma_M\\
&=& 2\int_M \langle \Phi, H\nu\rangle d\sigma_M,
\end{eqnarray*}
where $\nabla$ denotes the covariant derivative in $\mathbb{H}^3$.

Observing that in standard hyperbolic spaces, we have
\[\nabla^2 \cosh(d)=\cosh{d}g_{\mathbb{H}^3},\]
which implies
\[\nabla_{e_i}X=\cosh{d(x)}e_i.\]
Direct calculation shows that
\begin{equation*}
\sum_{i=1}^{2}\langle \nabla_{e_i}\Phi, e_i\rangle=
\left\{\begin{array}{l}
\large{2\cosh(d)(\rho^{-2}-\rho_0^{-2}) \textrm{\quad\quad\quad\quad}|X|<\rho}\\
\large{\frac{2\cosh(d)|X^{\perp}|^2}{|X|^{4}}-\frac{2\cosh(d)}
{\rho_0^{2}}\textrm{\quad}\rho<|X|<\rho_0}\\
\large{0 \textrm{\quad\quad\quad\quad\quad\quad\quad\quad\quad\quad}\rho_0<|X|}
\end{array}.\right.
\end{equation*}
Integrating the above equality on $M$ and substituting it to (\ref{L1}), we get
\begin{eqnarray}
\rho^{-2}\int_{M_{\rho}}\cosh{d} d\sigma_M-\rho_0^{-2}\int_{M_{\rho_0}}\cosh{d} d\sigma_M&=&-\int_{M_{\rho_0,\rho}}\frac{|X^{\perp}|^2}{|X|^4}\cosh{d}d\sigma_M \nonumber\\
&+&\int_{M_{\rho_0,\rho}}(|X|^{-2}-\rho_0^{-2})\langle X, H\nu\rangle d\sigma_M\nonumber\\
&+&\int_{M_\rho}(\rho^{-2}-\rho_0^{-2})\langle X, H\nu\rangle d\sigma_M ,\label{L3}
\end{eqnarray}
where $M_\rho\doteqdot\{x\in M : \sinh d(x)\leq \rho\}$ and $M_{\rho_0,\rho}\doteqdot M_{\rho_0}-M_{\rho}$.

For any $x\in M_{\rho_0, \rho}$, we have
\begin{equation}
-\cosh{d}\frac{|X^\perp|^2}{|X|^4}+\frac{\langle X, H\nu\rangle}
{|X|^2}-\rho_0^{-2}\langle X, H\nu\rangle\leq\frac{H^2}{4}.\label{L5}
\end{equation}
In fact, if $\langle X, H\nu\rangle\leq 0$, by $|X|^{-2}-\rho_0^{-2}\geq 0$ in  $M_{\rho_0, \rho}$,
we deduce the RHS of (\ref{L5}) is non-positive, and therefore (\ref{L5}) is true.
If $\langle X, H\nu\rangle>0$, by Cauchy-Schwartz inequality, we compute
\begin{equation}
-\cosh{d}\frac{|X^\perp|^2}{|X|^4}+\frac{\langle X, H\nu\rangle}{|X|^2}-\rho_0^{-2}\langle X, H\nu\rangle\leq-\frac{|X^\perp|^2}{|X|^4}+\frac{\langle X, H\nu\rangle}{|X|^2}
\leq\frac{H^2}{4}.
\end{equation}
Combining (\ref{L5}) with (\ref{L3}), we get
\begin{eqnarray*}
\rho^{-2}A(M_{\rho})&\leq&\rho_0^{-2}A(M_{\rho_0})+\int_{M_{\rho_0,\rho}}\frac{H^2}{4}d\sigma_M
+\int_{M_\rho}({\rho}^{-2}-\rho_0^{-2})\langle X, H\nu\rangle d\sigma_M\nonumber\\
&&+\int_{M_{\rho_0}}\frac{\cosh d-1 }{\rho^2_0}d\sigma_M\nonumber\\
&\leq&\rho_0^{-2}A(M_{\rho_0})+\int_{M_{\rho_0,\rho}}\frac{H^2}{4}d\sigma_M
+\int_{M_\rho}({\rho}^{-2}-\rho_0^{-2})\langle X, H\nu\rangle d\sigma_M\nonumber\\
&&+\frac{1}{2}A(M_{\rho_0}),
\end{eqnarray*}
where we have used that $\frac{\cosh d-1 }{\rho^2_0}\leq\frac{1}{2}$ in $M_{\rho_0}$.

Let $\rho\rightarrow 0$, we get
\begin{equation}
\pi\leq(\rho_0^{-2}+\frac{1}{2})A(M_{\rho_0})+\frac{1}{4}\int_{M_{\rho_0}}H^2d\sigma_M.\label{L2}
\end{equation}
Since $\{x \in M: \sinh d(x)<\rho_0\}\subset \{x \in M: d(x)<\rho_0\}$, we know that the inequality
(\ref{L2}) also holds as $M_{\rho_0}$ defined by $\{x \in M: d(x)<\rho_0\}$

Now we prove the Lemma 6.3 by
using (\ref{L2}). Let $\rho_0\doteqdot(\frac{A(M)}{W(M)})^{\frac{1}{2}}$
and choose an integer $N$
such that
\begin{equation}
2N\rho_0<diam(M)\leq 2(N+1)\rho_0. \label{L4}
\end{equation}
We can assume that $N\geq1$. Otherwise, we have
\begin{equation*}
diam(M)\leq 2\rho_0= 2(\frac{A(M)}{W(M)})^{\frac{1}{2}}\leq
\frac{1}{2\pi}(A(M)W(M))^{\frac{1}{2}},
\end{equation*}
here we have used the fact that  $\frac{W(M)}{4\pi}\geq 1$ which is deduced
from inequality (\ref{W3}).

For $N\geq 1$, we can choose $\{y_0,\cdots , y_N\}$ in $ M$, such that $d(y_i, y_j)\geq 2\rho_0$ for $i\neq j$ which implies the balls $M_{\rho_0}(y_i)\doteqdot\{x\in M: d(x,y_i)<\rho\}$ are disjoint. Using the formula (\ref{L2}) for every $M_{\rho_0}(y_i)$ and summing $i$ from $0$ to $N$, we derive
\begin{equation*}
(N+1)\pi\leq(\rho_0^{-2}+\frac{1}{2})A(M)+\frac{1}{4}W(M).
\end{equation*}
Combining with (\ref{L4}), we get
\begin{eqnarray*}
diam(M)&\leq&\frac{2\rho_0}{\pi}[(\rho_0^{-2}+\frac{1}{2})A(M)+\frac{W(M)}{4}]\nonumber\\
&=&\frac{2}{\pi}[(\frac{W(M)}{A(M)})^{\frac{1}{2}}+\frac{1}{2}
(\frac{A(M)}{W(M)})^{\frac{1}{2}}]A(M)+\frac{1}{2\pi}(A(M)W(M))^{\frac{1}{2}}\nonumber\\
&\leq&\frac{2}{\pi}[(\frac{W(M)}{A(M)})^{\frac{1}{2}}+\frac{1}{2}
(\frac{W(M)}{A(M)})^{\frac{1}{2}}]A(M)+\frac{1}{2\pi}(A(M)W(M))^{\frac{1}{2}}\nonumber\\
&=&\frac{7}{2\pi}A^{\frac{1}{2}}(M)W^{\frac{1}{2}}(M).
\end{eqnarray*}
In the third line, we have used the inequality $\frac{A(M)}{W(M)}\leq 1$ which is also deduced
from (\ref{W3}).
\end{proof}
In the following, we will give a lower
bound for the area of a dumbbell in terms of its length which must be satisfied for its
neck not to pinch off under MCF. Then by violating the lower bound, we can prove the theorem.
\begin{proof}{(\emph{of Theorem \ref{THE}})}
\quad Suppose $M_0$ is a topological sphere embedded in $\mathbb{H}^3$ which encloses
two round spheres $B$, both of radius $1$, with centres separated by a
distance $d>0$, then the diameter of $M_0$ is given by
\begin{equation*}
diam(M_0)\geq d+2.
\end{equation*}

By the comparison principle in Section 3 and equality (\ref{F6}),
we have the estimate of the diameter
\begin{equation}
diam(M_t)\geq d+2r(t), \label{M6}
\end{equation}
where $r(t)$ satisfies the following equation which comes from the MCF equation of round balls
\begin{equation*}
e^t \cosh r(t)=\cosh1,
\end{equation*}
for $t<T_0\doteqdot\ln {\cosh 1}$.

Combing (\ref{M5}) with (\ref{M6}), we get
\begin{equation*}
d+2r(t)\leq\frac{7}{2\pi}(A(M_t)W(M_t))^{\frac{1}{2}}=
\frac{7}{2\pi}[A(-\frac{1}{2}\frac{d}{dt}A)]^{\frac{1}{2}}=\frac{7}{4\pi}[-\frac{d}{dt}A^2(M_t)]^{\frac{1}{2}},
\end{equation*}
which implies
\begin{equation}
\frac{16\pi^2}{49}(d+2r(t))^2\leq -\frac{d}{dt}A^2(M_t).\label{M7}
\end{equation}
If there is non-singularity of $M_t$ before $T_0$, we have
\begin{equation*}
\frac{16\pi^2}{49}\int_0^{T_0}(d+2r)^2dt\leq A^2(M_0)-A^2(M_{T_0})\leq A^2(M_0),
\end{equation*}
which yields
\begin{equation}
A^2(M_0)\geq\frac{16\pi^2}{49}\int_0^{T_0}(d^2+4r^2+4rd)dt,\label{M8}
\end{equation}
in particular that
\begin{equation}
A^2(M_0)\geq\frac{16\pi^2T_0d^2}{49}.\label{M9}
\end{equation}
If (\ref{M9}) fails to hold, then a singularity must develop in the flow before time
\begin{equation}
t_0=\frac{49 A^2(M_0)}{16\pi^2d^2}.
\end{equation}

But we can choose $M_0$ to be the
shape of dumbell with a thin cylinder such that $A(M_0)$ satisfies
\begin{equation*}
A(M_0)=2Vol(B(1))+\epsilon d.
\end{equation*}
It is easy to see that the inequality (\ref{M9}) is
violated for some sufficiently large $d$, whenever $\epsilon$ is small enough. Thus such sphere $M_0$
must develop a singularity under MCF before the volume of it encloses shrinks to zero.
\end{proof}
\section{Acknowledgements}
The author would like to express his thanks to Professor L.
Ni and Professor X. Liu for brilliant guidance and stimulations. Thanks also go to A. Zhu and H.
Shao for a lot of conversations. The author would also like to thank P. Bryan for his interest in
this paper.

\end{document}